\providecommand{\U}[1]{\protect\rule{.1in}{.1in}}
\providecommand{\U}[1]{\protect\rule{.1in}{.1in}}
\newtheorem{theorem}{Theorem}
\theoremstyle{plain}
\newtheorem{lemma}{Lemma}
\theoremstyle{definition}
\newtheorem{example}{Example}
\numberwithin{equation}{section}
\begin{document}
\title{Tilings from Tops of Overlapping Iterated Function Systems}
\author[M. F. Barnsley]{Michael F. Barnsley}
\author[C. de Wit]{Corey de Wit}

\begin{abstract}
The \textit{top} of the attractor of a hyperbolic iterated function system
$\left\{  f_{i}:\mathbb{R}^{n}\rightarrow\mathbb{R}^{n}|i=1,2,\dots,M\right\}
$ is defined and used to extend self-similar tilings to overlapping systems.
The theory provides sequences of approximate supertiles that converge to
tilings. Individual tiles in a tiling are limits of nested decreasing
sequences of approximate tiles. Examples include systems of finite type,
tilings related to aperiodic monotiles, and ones where there are infinitely
many distinct but related prototiles.

\end{abstract}
\dedicatory{ }\maketitle

\section{Introduction}

How can self-similar tiling theory \cite{bandt0, kenyon} be extended to
collections of overlapping tiles?

Tilings associated with iterated function systems (IFS) that obey the open set
condition (OSC) or are of finite type have been investigated in other papers,
see for example \cite{bandt2, bandt3, tilings, gdifs} and references therein.

Here we use top addresses, defined in Section \ref{tops section}, to extend
self-similar tiling theory to include tilings generated by iterated function
systems (IFS) that do not obey the open set condition and are not of finite
type. The tiles in these tilings are closures of the limits of decreasing
sequences of approximate tiles and have nonempty interiors. Individual tiles
become fully defined after finitely many steps, while the tilings themselves
become fully defined only in the infinite limit. Since our construction
depends mainly on topology rather than geometry, it also applies to
point-fibred IFSs acting on compact Hausdorff spaces, see \cite{tilings,
kieninger}.

In \cite{BB} a special case of tilings defined by top adddresses was
introduced. This paper extensively generalizes and extends this earlier result.

A different approach to the construction of tilings using overlapping tiles
has been provided by Akiyama and co-workers \cite{akiyama2}.

As an application, we show how our construction applies in the context of
aperiodic monotiles \cite{smith}.

\section{Iterated Function Systems, Tops, Blowups, and Tilings}

\subsection{Iterated function systems}

Let $F$ be an iterated function system (IFS)
\[
F=\{f_{i}:\mathbb{R}^{n}\mathbb{\rightarrow R}^{n}|i=1,2,\dots,M\}
\]
where $f_{i}:\mathbb{R}^{n}\mathbb{\rightarrow R}^{n}$ is a strictly
contractive homeomorphism with metric $d$ such that
\[
d(f_{i}(x),f_{i}(y))\leq\lambda d(x,y),\text{ some }0<\lambda<1,\text{ for all
}x,y,i.
\]
Then it is known \cite{hutchinson} that $F$ possesses a unique attractor, the
only non-empty compact subset of $\mathbb{R}^{n}$ which obeys
\[
A=\bigcup\limits_{i=1}^{M}f_{i}(A).
\]
We will use the fact that if $A$ has non-empty interior, then it is the
closure of its interior, $A=\overline{A^{\circ}}.$

We will need the following notions related to symbolic handling of subsets of
$A.$ Let $\mathbb{N}$ be the positive integers. Consider $\left\{
1,2,\dots,M\right\}  ^{\mathbb{N}}$, the set of infinite strings of the form
$\mathbf{j}=j_{1}j_{2}\dots$ where each $j_{i}$ belongs to $\left\{
1,2,\dots,M\right\}  $. We define $i:\left\{  1,2,\dots,M\right\}
^{\mathbb{N}}\rightarrow\left\{  1,2,\dots,M\right\}  ^{\mathbb{N}}$ for
$i\in\{1,2,\dots,M\}$ by $i(\mathbf{j)=}ij_{1}j_{2}\dots$ We may also write
$k_{1}k_{2}\dots k_{l}\mathbf{j}$ to mean the string $k_{1}k_{2}\dots
k_{l}j_{1}j_{2}\dots.$ The string $\mathbf{j}$ truncated to length $m$ is
denoted by $\mathbf{j}|m=j_{1}j_{2}\dots j_{m},$ and we define
\[
f_{\mathbf{j}|m}=f_{j_{1}}f_{j_{2}}\dots f_{j_{m}}=f_{j_{1}}\circ f_{j_{2}%
}\circ\dots\circ f_{j_{m}}.
\]
We define a metric $d^{\prime}$ by $d^{\prime}(\mathbf{j},\mathbf{k}%
)=2^{-\max\{n|j_{m}=k_{m},m=1,2,...,n\}}$ for $\mathbf{j}\neq\mathbf{k}$, so
that $\left(  \left\{  1,2,\dots,M\right\}  ^{\mathbb{N}},d^{\prime}\right)  $
is a compact metric space.

Then a continuous surjection $\pi:\left\{  1,2,\dots,M\right\}  ^{\mathbb{N}%
}\rightarrow A$ is defined by%
\[
\pi(\mathbf{j})=\lim_{m\rightarrow\infty}f_{\mathbf{j}|m}(x)=\lim
_{m\rightarrow\infty}f_{j_{1}}f_{j_{2}}\dots f_{j_{m}}(x).
\]
It is well-known \cite{hutchinson} that the limit is independent of $x$. Also,
the convergence is uniform in $\mathbf{j}$ over $\left\{  1,2,\dots,M\right\}
^{\mathbb{N}},$ and uniform in $x$ over any compact subset of $\mathbb{R}^{n}%
$. We say $\mathbf{j}\in\Sigma$ is an \textit{address} of the point
$\pi(\mathbf{j})\in A$. Each point in $A$ has at least one address.

\subsection{\label{tops section}Tops}

Since $\pi:\left\{  1,2,\dots,M\right\}  ^{\mathbb{N}}\rightarrow A$ is
continuous and onto, it follows that $\pi^{-1}(x)$ is closed for all $x\in A.$
Hence, a map $\tau:A\rightarrow\left\{  1,2,\dots,M\right\}  ^{\mathbb{N}}$
and a set $\Sigma$ are well-defined by
\begin{align*}
\tau(x)  &  :=\max\{\mathbf{k\in}\Sigma|\pi(\mathbf{k)=}x\},\\
\Sigma &  :=\tau(A)=\left\{  \tau(x):x\in A\right\}  ,
\end{align*}
where the maximum is with respect to lexicographical ordering. We call
$\tau(x)$ the \textit{top }address\textit{ }of $x\in A$.

The top address $\tau(x)$ of $x\in A$ and the set of top addresses $\Sigma$
may be calculated by following orbits under the dynamical system\textit{
}$D:A\rightarrow A$ as follows. Partition $A$ into $\left\{  A_{i}\right\}  $
according to
\[
A_{1}=f_{1}(A),A_{2}=f_{2}(A)\backslash A_{1},\ A_{3}=f_{3}(A)\backslash
(A_{1}\cup A_{2}),\dots,A_{M}=f_{M}(A)\backslash\cup_{n\neq M}A_{n}.
\]
We always assume $A_{M}\neq\emptyset.$ Define the orbit $\left\{
x_{n}\right\}  _{n=1}^{\infty}$ of $x=x_{1}\in A,$ by $x_{n+1}=Dx_{n}%
=f_{i_{n}}^{-1}(x_{n})$ where $i_{n}$ is the unique index such that $x_{n}\in
A_{i_{n}}$. Then $\tau(x)=i_{1}i_{2}i_{3}\dots\in\Sigma$.

Top addresses are related to $\beta$-expansions, see in particular
\cite{thurston}. See also \cite{akiyama} and references therein.

Define the \textit{critical set }of $F$ to be
\[
C=\overline{%
{\textstyle\bigcup\limits_{i}}
\partial A_{i}\backslash\partial A}.
\]

Let $\sigma:\{1,2,\dots,M\}^{\mathbb{N}}\rightarrow\{1,2,\dots,M\}^{\mathbb{N}%
}$ be the shift operator defined by $\sigma(\mathbf{j)=}j_{2}j_{3}\dots$. Then%
\[
f_{i}\circ\pi=\pi\circ i\mathbf{\ }\text{and }f_{j_{1}}^{-1}\circ\pi\left(
\mathbf{j}\right)  =\pi\circ\sigma\left(  \mathbf{j}\right)  \text{ for all
}i\text{ and }\mathbf{j}.
\]
Key to the results in this paper is the total shift invariance of $\Sigma$, as follows.

\begin{lemma}
\label{proposition1}$\Sigma=\sigma\left(  \Sigma\right)  $.
\end{lemma}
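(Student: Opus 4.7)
The plan is to prove the two inclusions $\sigma(\Sigma)\subseteq\Sigma$ and $\Sigma\subseteq\sigma(\Sigma)$ separately, each by a short argument that combines the semi-conjugacy identities $f_i\circ\pi=\pi\circ i$ and $f_{j_1}^{-1}\circ\pi(\mathbf{j})=\pi\circ\sigma(\mathbf{j})$ with the lex-maximality that characterizes $\tau$.

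For $\sigma(\Sigma)\subseteq\Sigma$, I would take $\mathbf{k}=\tau(x)\in\Sigma$ and set $y=\pi(\sigma(\mathbf{k}))=f_{k_1}^{-1}(x)$, which lies in $A$. I would then show $\sigma(\mathbf{k})=\tau(y)$: if some address $\mathbf{j}$ of $y$ were strictly lex-larger than $\sigma(\mathbf{k})$, the prepended string $k_1\mathbf{j}$ would be an address of $x$ strictly lex-larger than $\mathbf{k}=k_1\sigma(\mathbf{k})$, contradicting $\mathbf{k}=\tau(x)$. Hence $\sigma(\mathbf{k})=\tau(y)\in\tau(A)=\Sigma$.

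For $\Sigma\subseteq\sigma(\Sigma)$, given $\mathbf{k}=\tau(x)\in\Sigma$, I would construct a preimage of $\mathbf{k}$ under $\sigma$ that lies in $\Sigma$ by prepending the maximal symbol $M$. Concretely, I would verify $\tau(f_M(x))=M\mathbf{k}$: the string $M\mathbf{k}$ is an address of $f_M(x)$, and any competing address of $f_M(x)$ either begins with an index strictly less than $M$ (and so loses at the first digit), or begins with $M$, in which case its tail is an address of $x$ and so cannot exceed $\mathbf{k}$ by maximality of $\tau(x)$. Thus $\mathbf{k}=\sigma(\tau(f_M(x)))\in\sigma(\Sigma)$.

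I do not expect a serious obstacle. The argument is essentially bookkeeping once one notices that lex comparison is preserved by concatenation on the left and that shifting a string by one position is inverted precisely by prepending a single symbol. The one point requiring a touch of care is the second inclusion, where the role of $M$ as the maximum symbol must be used to guarantee that no address of $f_M(x)$ can beat $M\mathbf{k}$ at the first digit; this is why the assumption that $M$ is the largest index (and implicitly that $f_M(A)\subseteq A$) is exactly what is needed.
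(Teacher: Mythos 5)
The paper does not actually supply an argument here---it dismisses the lemma as ``readily checked'' and cites the earlier paper [BB]---so your proposal is providing detail the paper omits rather than competing with a written proof. Your overall strategy is the right one: both inclusions follow from the semi-conjugacy $f_i\circ\pi=\pi\circ i$ together with the fact that prepending a fixed symbol preserves lexicographic comparison, and your argument for $\sigma(\Sigma)\subseteq\Sigma$ (lift a hypothetical larger address of $y=f_{k_1}^{-1}(x)$ to a larger address of $x$) is correct as written and independent of any ordering convention.

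The one genuine problem is in the reverse inclusion, and it comes from the paper's ordering convention. The ``maximum'' in the definition of $\tau$ is taken with respect to the ordering in which the symbol $1$ is \emph{largest}: this is forced by the partition $A_1=f_1(A)$, $A_2=f_2(A)\backslash A_1,\dots$ (points in overlaps are assigned to the smaller index) and is stated explicitly in Section 3.1, where the ordering is $11>12>13>14>21>\dots$. Under that convention your choice of $M$ as the prepended symbol is exactly wrong: a competing address of $f_M(x)$ beginning with an index less than $M$ \emph{wins} at the first digit rather than losing, so $\tau(f_M(x))$ need not be $M\mathbf{k}$ (for instance if $f_M(x)\in f_1(A)$ its top address begins with $1$). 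The fix is immediate---prepend $1$ instead of $M$, noting $f_1(A)\subseteq A$ and that any address of $f_1(x)$ not beginning with $1$ loses at the first digit, while one beginning with $1$ has a tail that is an address of $x$ and hence is at most $\mathbf{k}$---after which your argument goes through verbatim. So the idea is sound, but as literally written the second inclusion fails against the paper's definitions.
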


\begin{proof}
This is readily checked \cite{BB}.
\end{proof}

\subsection{Tops at finite depth}

Define $\Sigma_{n}$ to be the elements of $\Sigma$ truncated to length
$n\in\mathbb{N}$,
\[
\Sigma_{n}=\left\{  \left(  \mathbf{j}|n\right)  |\mathbf{j}\in\Sigma\right\}
.
\]
Corresponding to $\mathbf{i}|n\in$ $\Sigma_{n}$, define members of a partition
of $A$ at depth $n$ by
\[
A_{\mathbf{i}|n}=\left\{  \pi(\mathbf{j}):\mathbf{j\in}\Sigma,\left(
\mathbf{j|}n\right)  =\left(  \mathbf{i|}n\right)  \right\}  .
\]
Equivalently,
\begin{equation}
A_{\mathbf{i}|n}=f_{\mathbf{i}|n}(A)\backslash\cup\{f_{\mathbf{j}%
|n}(A)|\mathbf{j\in}\Sigma,(\mathbf{j}|n)>(\mathbf{i}|n)\}.\label{coreydef}%
\end{equation}

We will need the following lemmas.

\begin{lemma}
\label{topshiftlemma}Let $\mathbf{k}\in\Sigma.$ Then $f_{k_{1}}\left(
A_{(\sigma\left(  \mathbf{k}\right)  \mathbf{)|}n-1}\right)  \supseteq
A_{\mathbf{k|}n}$ for all $n\in\mathbb{N}$, $n>1$.
\end{lemma}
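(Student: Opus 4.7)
The plan is to chase the first definition of $A_{\mathbf{k}|n}$ given in the paper, namely $A_{\mathbf{k}|n}=\{\pi(\mathbf{j}):\mathbf{j}\in\Sigma,\ \mathbf{j}|n=\mathbf{k}|n\}$. I would pick an arbitrary $x\in A_{\mathbf{k}|n}$ and explicitly produce a point $y\in A_{\sigma(\mathbf{k})|n-1}$ with $f_{k_{1}}(y)=x$; this gives the desired set containment.

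By definition, there exists $\mathbf{j}\in\Sigma$ with $\pi(\mathbf{j})=x$ and $\mathbf{j}|n=\mathbf{k}|n$. In particular $j_{1}=k_{1}$ and $j_{2}\dots j_{n}=k_{2}\dots k_{n}$. Set $\mathbf{j}'=\sigma(\mathbf{j})$ and $y=\pi(\mathbf{j}')$. The decisive step is to verify $\mathbf{j}'\in\Sigma$, which is exactly the total shift invariance supplied by Lemma \ref{proposition1}. Once this is in hand, the equality $\mathbf{j}'|(n-1)=j_{2}\dots j_{n}=\sigma(\mathbf{k})|(n-1)$ shows, via the same definitional unpacking, that $y\in A_{\sigma(\mathbf{k})|n-1}$.

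To conclude, I would apply the semiconjugacy $f_{i}\circ\pi=\pi\circ i$ noted just before Lemma \ref{proposition1}, with $i=k_{1}=j_{1}$ and input $\mathbf{j}'$:
\[
f_{k_{1}}(y)=f_{j_{1}}\bigl(\pi(\sigma(\mathbf{j}))\bigr)=\pi\bigl(j_{1}\sigma(\mathbf{j})\bigr)=\pi(\mathbf{j})=x,
\]
so $x\in f_{k_{1}}(A_{\sigma(\mathbf{k})|n-1})$, as required.

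The only nontrivial ingredient is the appeal to Lemma \ref{proposition1}: without knowing $\sigma(\Sigma)=\Sigma$, the shifted address $\sigma(\mathbf{j})$ would not be guaranteed to be a top address, and $\pi(\sigma(\mathbf{j}))$ might fail to land in any depth-$(n-1)$ cell of the top-address partition. Everything else is a mechanical unwrapping of the definition of $A_{\mathbf{i}|n}$ and of the standard intertwining $f_{i}\circ\pi=\pi\circ i$, so I anticipate no further obstacles; note also that, as stated, the lemma only asserts containment, so one does not need to worry about points of $f_{k_{1}}(A_{\sigma(\mathbf{k})|n-1})$ that might lie outside $A_{\mathbf{k}|n}$.
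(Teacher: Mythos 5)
Your proof is correct, but it takes a genuinely different route from the paper. The paper works with the set-difference characterization of Equation \ref{coreydef}: it writes $A_{\mathbf{k}|n}$ as $f_{\mathbf{k}|n}(A)$ minus the images $f_{l_{1}\dots l_{n}}(A)$ over all $l_{1}\dots l_{n}>k_{1}\dots k_{n}$, writes $f_{k_{1}}\left(A_{\sigma(\mathbf{k})|n-1}\right)$ as the same set minus only the images $f_{k_{1}l_{2}\dots l_{n}}(A)$ with $l_{2}\dots l_{n}>k_{2}\dots k_{n}$, and observes that the latter exclusion condition is less restrictive, so the latter set is larger. You instead chase elements through the first (address-based) definition of $A_{\mathbf{i}|n}$, using the shift invariance $\sigma(\Sigma)=\Sigma$ of Lemma \ref{proposition1} together with the semiconjugacy $f_{i}\circ\pi=\pi\circ i$. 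Both arguments are sound, but they lean on different facts: the paper's comparison of exclusion sets tacitly requires that any forbidden region $f_{k_{1}l_{2}\dots l_{n}}(A)$ whose index string happens not to lie in $\Sigma_{n}$ is still covered by forbidden regions indexed by strings in $\Sigma_{n}$ exceeding $\mathbf{k}|n$ (which follows because every point of $A$ has a top address), whereas your element chase sidesteps this entirely and instead makes the reliance on Lemma \ref{proposition1} explicit --- which is fitting, since the paper itself flags total shift invariance of $\Sigma$ as the key ingredient. Your closing remark that only containment (not equality) is claimed is also apt: equality can indeed fail, which is exactly why the sequences $S(l,\mathbf{i},k)$ in Theorem \ref{tiling theorem} are merely nested decreasing rather than constant.
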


\begin{proof}
We compare the sets
\[
\{f_{k_{1}...k_{n}}(x)|f_{k_{1}...k_{n}}(x)\notin f_{l_{1}l_{2}...l_{n}%
}(A)\text{ for all }l_{1}...l_{n}>k_{1}...k_{n}\}
\]
and
\[
\{f_{k_{1}...k_{n}}(x)|f_{k_{1}}f_{k_{2}...k_{n}}(x)\notin f_{k_{1}}%
f_{l_{2}...l_{n}}(A)\text{ for all }l_{2}...l_{n}>k_{2}...k_{n}\}.
\]
The condition in the latter expression is less restrictive.
\end{proof}

Also, since $\Sigma$ is shift invariant, we have:

\begin{lemma}
\label{smallerlemma}Let $n>1.$ If $i_{1}i_{2}\dots i_{n-1}i_{n}\in\Sigma_{n}$,
then both $i_{2}\dots i_{n-1}i_{n}\ $and $i_{1}i_{2}\dots i_{n-1}\ $belong to
$\Sigma_{n-1}.$
\end{lemma}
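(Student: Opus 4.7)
The plan is to prove each of the two membership claims separately by exhibiting an explicit element of $\Sigma$ whose length-$(n-1)$ truncation gives the desired string. Both constructions are short: one is immediate from the definition of $\Sigma_n$, and the other uses shift invariance of $\Sigma$ from Lemma \ref{proposition1}.

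First I would unpack the hypothesis. Since $i_1 i_2 \dots i_n \in \Sigma_n$, the definition of $\Sigma_n$ gives some $\mathbf{j} \in \Sigma$ with $\mathbf{j}|n = i_1 i_2 \dots i_n$. The second claim is then essentially by inspection: the same $\mathbf{j}$ satisfies $\mathbf{j}|(n-1) = i_1 i_2 \dots i_{n-1}$, so $i_1 i_2 \dots i_{n-1} \in \Sigma_{n-1}$.

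For the first claim I would invoke total shift invariance. By Lemma \ref{proposition1}, $\sigma(\mathbf{j}) \in \Sigma$. Writing $\sigma(\mathbf{j}) = j_2 j_3 \dots$, we have $\sigma(\mathbf{j})|(n-1) = j_2 j_3 \dots j_n = i_2 i_3 \dots i_n$, so $i_2 i_3 \dots i_n \in \Sigma_{n-1}$.

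There is no real obstacle here; the entire content of the lemma is packaged inside Lemma \ref{proposition1}, and the proof is a straightforward unwinding of definitions. The only thing to be careful about is that the first claim genuinely needs shift invariance (truncating from the left is not automatic from the definition of $\Sigma_n$), whereas the second claim is truly just a matter of reading off $\mathbf{j}|(n-1)$ from $\mathbf{j}|n$.
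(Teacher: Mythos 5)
Your proof is correct and matches the paper's intent exactly: the paper offers no formal proof, merely prefacing the lemma with ``since $\Sigma$ is shift invariant, we have,'' which is precisely your argument --- the right-truncation claim follows directly from the definition of $\Sigma_{n-1}$ applied to the same witness $\mathbf{j}\in\Sigma$, and the left-truncation claim follows from $\sigma(\mathbf{j})\in\sigma(\Sigma)=\Sigma$ via Lemma \ref{proposition1}. Nothing further is needed.
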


\subsection{Top blowups and tilings\label{topblowupsec}}

We focus on the overlapping case, where the OSC$\ $does not hold, with
particular interest in the case where $A$ has nonempty interior. We show how
tilings of $\mathbb{R}^{n}$ can be defined, generalizing well known
constructions that apply when the OSC holds \cite{bandt2, tilings, strichartz}.

Let $\overleftarrow{\Sigma}\subset\{1,2,\dots,M\}^{\mathbb{N}}$ be the set
strings $\mathbf{l}=l_{1}l_{2}\dots$ such that $l_{n}l_{n-1}\dots l_{1}%
\in\Sigma_{n}$ for all $n.$ Note that $\overleftarrow{\Sigma}$ is closed but
$\Sigma$ may not be. We define, for $\mathbf{j\in}\overleftarrow{\Sigma}$ and
$k\in\mathbb{N}$,
\[
f_{-\mathbf{j}|k}=f_{j_{1}}^{-1}f_{j_{2}}^{-1}\dots f_{j_{k}}^{-1}=f_{j_{1}%
}^{-1}\circ f_{j_{2}}^{-1}\circ\dots\circ f_{j_{k}}^{-1}.
\]

Strichartz \cite{strichartz} defines the \textit{blowup} $\mathcal{A}\left(
\mathbf{j}\right)  $ of $A$ corresponding to $\mathbf{j\in}%
\overleftarrow{\Sigma}$ by%
\[
\mathcal{A}\left(  \mathbf{j|}n\right)  =\bigcup\limits_{l=1}^{n}%
f_{-\mathbf{j}|l}\left(  A\right)  \text{ and }\mathcal{A}\left(
\mathbf{j}\right)  =\bigcup\limits_{n=1}^{\infty}\mathcal{A}\left(
\mathbf{j|}n\right)  .
\]
(Note that Strichartz was concerned with situations where the OSC holds, in
which case $\overleftarrow{\Sigma}=\{1,2,\dots\}^{\mathbb{N}}.$) Here the
unions are of increasing nested sequences of sets so $\mathcal{A}\left(
\mathbf{j|}n\right)  =f_{\mathbf{j}|n}^{-1}\left(  A\right)  .$ Note that
$\mathcal{A}\left(  \mathbf{j|}n\right)  $ is related to $\mathcal{A}\left(
\mathbf{i|}n\right)  $ by the isometry $\left(  f_{-\mathbf{j}|n}\right)
\left(  f_{-\mathbf{i}|n}\right)  ^{-1}$. But possible relationships between
$\mathcal{A}\left(  \mathbf{i}\right)  $ and $\mathcal{A}\left(
\mathbf{j}\right)  $ are subtle because inverse limits are involved.

Under conditions on $F$ and fixed $\mathbf{j}\in\overleftarrow{\Sigma}$,
stated in Theorem \ref{tiling theorem}, we construct a collection of sets as
follows. We define:%
\begin{align}
\Pi(\mathbf{j}|k)  &  :=\left\{  f_{-\mathbf{j}|k}(A_{\mathbf{t|}%
k+1})|\mathbf{t\in}\Sigma\right\}  ,\label{tile limit}\\
\Pi(\mathbf{j})  &  =\lim_{k\rightarrow\infty}\Pi(\mathbf{j}|k),\text{ as
defined in Theorem \ref{tiling theorem}.}\nonumber
\end{align}
For $\mathbf{t\in}\Sigma$ and $\mathbf{j}\in\overleftarrow{\Sigma},$ we may
refer to sets of the form $f_{-\mathbf{j}|k}(A_{\mathbf{t|}k+1})$ and
$f_{-\mathbf{j}|k}(\overline{A_{\mathbf{t|}k+1}}),$ where the bar denotes
closure, as \textit{pre-tiles }and \textit{tiles} respectively.

\begin{versiona}
\begin{figure}[ptb]%
\centering
\includegraphics[
height=2.0 in,
width=5.0 in
]%
{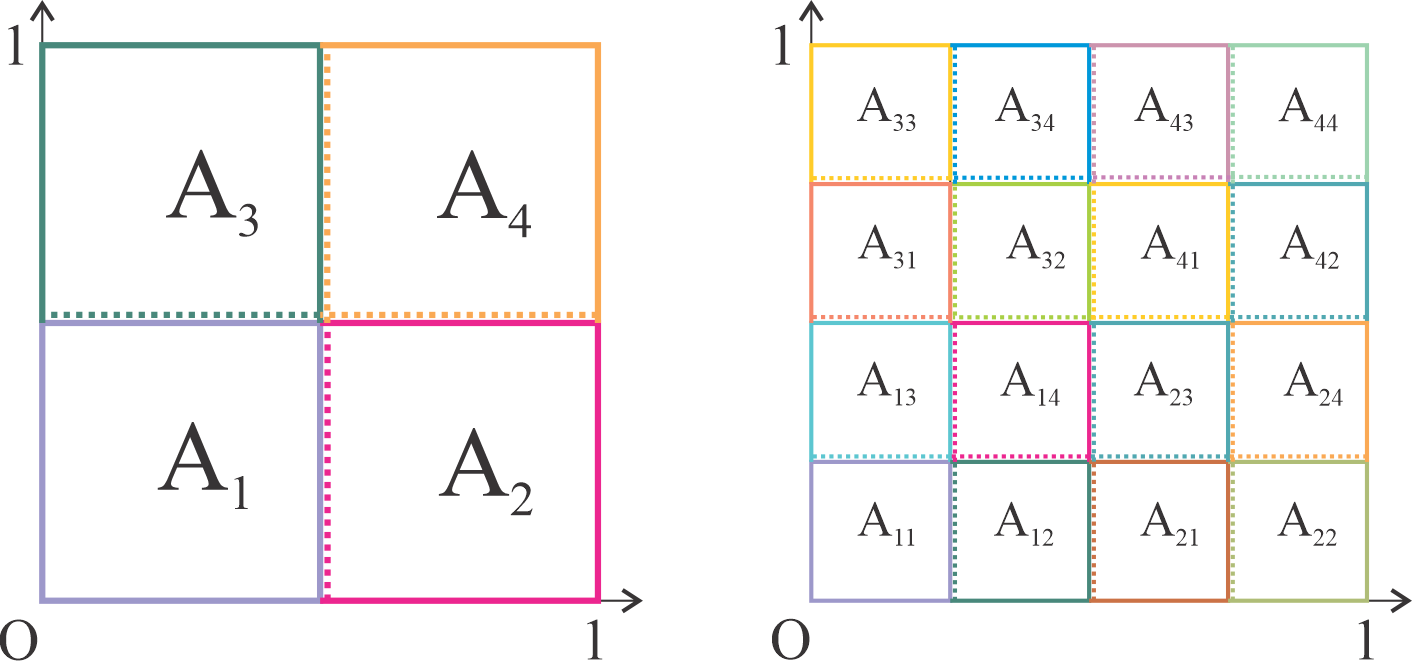}%
\caption{The right hand panel represents the
attractor of the IFS in Equation \ref{eqifs1} partitioned at depth one into
four pre-tiles, indicated by different colours. Dotted lines indicate open
boundaries. The left hand panel is similar, but partitioned to depth two.}%
\label{ownbound}%
\end{figure}
\end{versiona}

\begin{versiona}
\begin{figure}[ptb]%
\centering
\includegraphics[
height=3.0 in,
width=5.0 in
]%
{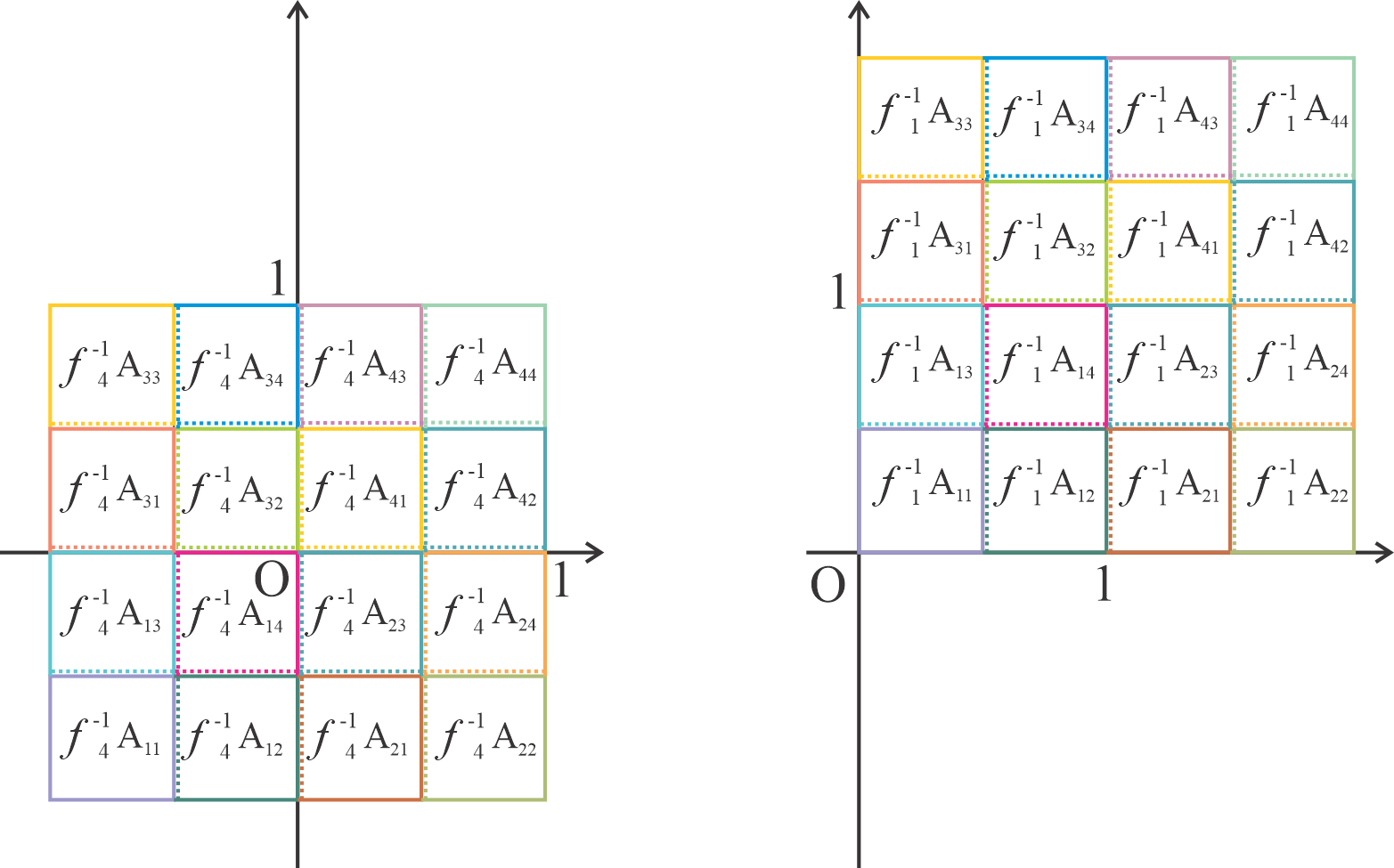}%
\caption{The maps $f_{4}^{-1}$ (left) and $f_{1}^{-1}$ (right) have been
applied to the right-hand partition in Figure \ref{ownbound}. Notice the
different positionings of the closed and open edges of the pre-tiles that
cover $\left(  0,1\right) \times\left(0,1\right).$}%
\label{ownbgrid}%
\end{figure}
\end{versiona}

When the OSC holds and $A$ has nonempty interior, that is $A^{\circ}%
\neq\emptyset$, the limit in Equation \ref{tile limit} is well defined because
$\overline{A_{\mathbf{t|}k+1}}=f_{\mathbf{t|}k+1}(A)$ so that, up to closure
of the sets $A_{\mathbf{t|}k+1}$, we have $\Pi(\mathbf{j}|k)\subset
\Pi(\mathbf{j}|k+1)$ for all $k,$ so $(\left\{  f_{-\mathbf{j}|k}%
(\overline{A_{\mathbf{t|}k+1}})|\mathbf{t\in}\Sigma\right\}  )_{k=1}^{\infty}$
is an increasing nested sequence of finite collections of just-touching sets.
In this case, at each level new pre-tiles are added, while the tiles defined
by taking the closures of pre-tiles belonging to the previous levels remain
unchanged. In Section \ref{osc case} we provide a simple example of pre-tiles
and use of the notion of the top of an attractor. Tilings constructed in this
manner, though with different language, are discussed in \cite{polygon,
graphtilings}.

But the general situation is quite different because nested sequences of
pre-tiles may change substantially from one level to the next. As we will
show, the collection of sets $\Pi(\mathbf{j})$ is well defined in situations
where $\Pi(\mathbf{j}|k)$ may not be a subset of $\Pi(\mathbf{j}|k+1)$, even
when the closures of the pre-tiles is taken. In general, for $\mathbf{j}%
\in\overleftarrow{\Sigma}$, here is an informal preview of what happens. At
each successive value of $k$, the pre-tiles from the previous level are
smaller, and new pre-tiles are added. Sequences of sucessively smaller
pre-tiles cease to change after finitely many steps, and their closures remain
thereafter as established tiles with nonempty interiors, members of a tiling
that is fully defined only in the limit as $k\rightarrow\infty$. Two simple
examples of this process, illustrating our notion of pre-tiles and showing how
pre-tiles may be successively smaller but finally stabilize, are given in
Section \ref{sec3}. The reader may prefer to consult Section \ref{sec3} before
proceeding to our main result, Theorem 1.

In the overlapping case it is not obvious that the pre-tiles we will define
have non-empty interiors. We will need the following Lemma.

\begin{lemma}
\label{lemmafour} Let $E$ and $F$ be subsets of $\mathbb{R}^{n}$ such that
$E\backslash F$ is not empty, $E$ is the closure of its interior, and $F$ is
closed. Then the interior $\left(  E\backslash F\right)  ^{\circ}$ of
$E\backslash F$ is not empty.

\begin{proof}
Suppose $\left(  E\backslash F\right)  ^{\circ}$ is empty. Then $E^{\circ
}\subset F.$ Taking the closure of both sides, it follows that $E$ is
contained in $F$. That is, $E\backslash F$ is empty, a contradiction. So if
$E\backslash F$ is not empty, then $\left(  E\backslash F\right)  ^{\circ}$ is
not empty.
\end{proof}
\end{lemma}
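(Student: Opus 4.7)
The plan is to argue by contradiction: assume $\left(E\setminus F\right)^{\circ}$ is empty and derive that $E\subset F$, contradicting the hypothesis that $E\setminus F$ is nonempty.

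The key intermediate claim is that $E^{\circ}\subset F$. To prove this, I would pick any $x\in E^{\circ}$ and suppose, for a secondary contradiction, that $x\notin F$. Since $F$ is closed, $\mathbb{R}^{n}\setminus F$ is open; since $x\in E^{\circ}$, there is an open neighborhood $U$ of $x$ with $U\subset E$. Then $U\cap(\mathbb{R}^{n}\setminus F)$ is an open set containing $x$ and contained in $E\setminus F$, which contradicts the assumption $\left(E\setminus F\right)^{\circ}=\emptyset$. Hence $x\in F$, so $E^{\circ}\subset F$.

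Having established $E^{\circ}\subset F$, I would take closures on both sides. Using the hypothesis $E=\overline{E^{\circ}}$ together with the closedness of $F$, the inclusion becomes $E=\overline{E^{\circ}}\subset\overline{F}=F$. Therefore $E\setminus F=\emptyset$, which is the required contradiction.

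The whole argument is pure point-set topology, and there is no serious obstacle. The only place where the hypotheses are genuinely needed is (i) the closedness of $F$, which is what converts a point of $E^{\circ}\setminus F$ into an open set inside $E\setminus F$, and (ii) the identity $E=\overline{E^{\circ}}$, which is what transforms an inclusion of $E^{\circ}$ into an inclusion of $E$ after taking closures. Both hypotheses are essential: dropping the first permits $F$ to be dense but contain no interior point of $E$, and dropping the second permits $E$ to contain isolated points or lower-dimensional pieces lying entirely in $F$.
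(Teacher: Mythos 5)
Your proof is correct and follows essentially the same contradiction argument as the paper's: assume $\left(E\backslash F\right)^{\circ}=\emptyset$, deduce $E^{\circ}\subset F$, and take closures using $E=\overline{E^{\circ}}$ and the closedness of $F$. You merely spell out the first deduction (that a point of $E^{\circ}\setminus F$ would yield an open subset of $E\setminus F$) which the paper leaves implicit.
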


In Theorem \ref{tiling theorem} $\mathbf{j\in}\overleftarrow{\Sigma}$ is fixed
throughout. We consider expressions of the form%
\[
S(l,\mathbf{i},k):=f_{j_{1}}^{-1}f_{j_{2}}^{-1}\dots f_{j_{k}}^{-1}%
(A_{j_{k}j_{k-1}\dots j_{l+2}j_{l+1}i_{1}i_{2}\dots i_{l+1}})
\]
for fixed $l$ and successively larger values of $k,$ where $k\geq l+1\geq1$
and $\mathbf{i\in}\Omega_{l}$ where
\[
\Omega_{l}:=\left\{  \mathbf{i}\in\Sigma:j_{k}j_{k-1}\dots j_{l+2}j_{l+1}%
i_{1}i_{2}\dots i_{l+1}\in\Sigma_{k+1}\text{ for all }k\geq l+1\right\}  .
\]
$\Omega_{l}$ is the set of addresses whose corresponding pre-tiles first
appear at the $l$-th iteration and remain visible for the rest of the tiling
construction. 

The sets $S(l,\mathbf{i},k)$ are supersets of pre-tiles that occur in the
expression $\lim_{k\rightarrow\infty}\Pi(\mathbf{j}|k)$. The key observation
underlying Theorem \ref{tiling theorem} is that, under an extra condition on
$\mathbf{j}$, as $k$ increases the sequence of sets $S(l,\mathbf{i},k)$
decreases for finitely many steps after which it does not change.

\begin{theorem}
\label{tiling theorem} Let $F$ be a strictly contractive IFS on $\mathbb{R}%
^{n}$ whose attractor $A$ has nonempty interior. Let $C$ be the critical set
of $F.$ Let $\mathbf{j\in}\overleftarrow{\Sigma}$, $\widehat{\lambda}%
\in(\lambda,1),$ $x_{0}\in A,$ and $\varepsilon>0,$ be fixed such that the
sequence $\left\{  x_{n}=f_{j_{n}}f_{j_{n-1}}\dots f_{j_{1}}(x_{0})\right\}
_{n=1}^{\infty}$ obeys $d(x_{n},C)>$ $\varepsilon\left(  \widehat{\lambda
}\right)  ^{n}$ for all $n$.

(i) For fixed $l\in\mathbb{N}$ and $\mathbf{i\in}\Omega_{l}$, the sequence
$\left\{  S(l,\mathbf{i},k):k=0,1,\dots\right\}  $ is nested decreasing
according to $S(l,\mathbf{i},0)\supseteq S(l,\mathbf{i},1)\supseteq\dots$ and
there is finite $K$ such that the sequence converges in a finite number of
steps to a set $S(l,\mathbf{i})$ with non-empty interior%
\[
S(l,\mathbf{i}):=S(l,\mathbf{i},K)=S(l,\mathbf{i},K+1)=\dots.
\]

(ii) Let%
\[
S(l):=\{S(l,\mathbf{i}):\mathbf{i}\in\Omega_{l}\}.
\]
Then $\left\{  S(l)\right\}  _{l=0}^{\infty}$ is a nested increasing sequence
of collections of sets with non-empty interiors, according to
\[
S(0)\subset S(1)\subset\dots.
\]

(iii) Define $\Pi(\mathbf{j})$ as the union limit of the above sequence, that
is
\[
\Pi(\mathbf{j}):=%
{\textstyle\bigcup\limits_{n=1}^{\infty}}
S(n).
\]
Then $\Pi(\mathbf{j})$ provides a tiling of a region that includes
\[
f_{j_{1}}^{-1}\circ f_{j_{2}}^{-1}\circ\dots\circ f_{j_{l}}^{-1}\circ
f_{j_{l+1}}^{-1}(A_{j_{l+1}})
\]
for all sufficiently large $l$. In particular, if $\mathbf{j}$ includes the
symbol $1$ infinitely often, then $\Pi(\mathbf{j})$ provides a tiling of a
region that includes the blowup $\mathcal{A}\left(  \mathbf{j}\right)  $.
\end{theorem}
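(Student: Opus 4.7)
The plan is to prove (i), (ii), and (iii) in turn. For (i), the nested decreasing property follows from Lemma \ref{topshiftlemma} applied with $\mathbf{k}=j_kj_{k-1}\dots j_{l+1}i_1\dots i_{l+1}$, after pulling back by $g_k:=f_{j_1}^{-1}\dots f_{j_k}^{-1}$ using the identity $g_k\circ f_{j_k}=g_{k-1}$. To prove stabilization, I would rewrite
\[
S(l,\mathbf{i},k)=T(l,\mathbf{i})\setminus\bigcup\{g_k(f_{\mathbf{v}}(A)):\mathbf{v}\in\Sigma_{k+1},\ \mathbf{v}>j_kj_{k-1}\dots j_{l+1}i_1\dots i_{l+1}\},
\]
where $T(l,\mathbf{i}):=g_l(f_{\mathbf{i}|l+1}(A))$ is independent of $k$. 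The identity $g_{k-1}(f_{\mathbf{v}'}(A))=g_k(f_{j_k\mathbf{v}'}(A))$ together with shift invariance of $\Sigma$ shows that the union of cuts at step $k-1$ is contained in that at step $k$, so the only \emph{new} cuts at step $k$ have $v_1>j_k$. If such a new cut meets $T(l,\mathbf{i})$ at a point $y$, then $f_{j_k}\dots f_{j_1}(y)\in f_{j_k}(A)\cap f_{v_1}(A)\subseteq C\cup\partial A$, while also lying within $f_{j_k}\dots f_{j_{l+1}}(A)$, a set of diameter at most $\lambda^{k-l}\mathrm{diam}(A)$ that contains $x_k$. This forces $d(x_k,C)\leq\lambda^{k-l}\mathrm{diam}(A)$, contradicting $d(x_k,C)>\varepsilon\widehat\lambda^k$ for $k$ large enough, since $\widehat\lambda>\lambda$. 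Hence the sequence stabilizes at some finite $K$, and non-empty interior follows from Lemma \ref{lemmafour} with $E=T(l,\mathbf{i})$ (the closure of its interior, since $A=\overline{A^\circ}$ and $g_l\circ f_{\mathbf{i}|l+1}$ is a homeomorphism) and $F$ the closed finite union of cuts.

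For (ii), given $\mathbf{i}\in\Omega_l$, I would obtain $\mathbf{i}'\in\Omega_{l+1}$ with $S(l+1,\mathbf{i}')=S(l,\mathbf{i})$ by choosing any extension to an infinite string $\mathbf{i}'\in\Sigma$ whose first $l+2$ symbols are $j_{l+1},i_1,\dots,i_{l+1}$. Such an extension exists because $j_{l+1}i_1\dots i_{l+1}\in\Sigma_{l+2}$ by the $k=l+1$ instance of $\mathbf{i}\in\Omega_l$. Then for every $k\geq l+2$ the address $j_k\dots j_{l+2}i'_1\dots i'_{l+2}$ coincides with $j_k\dots j_{l+1}i_1\dots i_{l+1}$, so $S(l+1,\mathbf{i}',k)=S(l,\mathbf{i},k)$ and $\mathbf{i}'\in\Omega_{l+1}$ reduces to the tail of the condition $\mathbf{i}\in\Omega_l$.

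For (iii), two distinct tiles in $\Pi(\mathbf{j})$ can be compared at a common stabilization level $k$, where they correspond to disjoint partition pieces $A_{\mathbf{k}|k+1}$, so their homeomorphic images have disjoint interiors. For the covering claim, given $y\in g_{l+1}(A_{j_{l+1}})$ I set $y^{(k)}:=f_{j_k}\dots f_{j_1}(y)\in A$ (for $k\geq l+1$) and use the contraction estimate $d(y^{(k)},x_k)\leq\lambda^{k-l-1}d(y^{(l+1)},x_{l+1})$ to transfer the escape condition to the orbit of $y$. Running the argument of (i) with $y$ in place of an abstract intersection point then shows that for $k$ large the top address $\tau(y^{(k)})|k+1$ takes the form $j_k\dots j_{l^*+1}i^*_1\dots i^*_{l^*+1}$ for some fixed pair $(l^*,\mathbf{i}^*)$ with $\mathbf{i}^*\in\Omega_{l^*}$, whence $y\in S(l^*,\mathbf{i}^*)\in\Pi(\mathbf{j})$.

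The hard part will be the final step of (iii): verifying that the stable pre-tile containing $y$ really is indexed by a legitimate pair $(l^*,\mathbf{i}^*)$ with $\mathbf{i}^*\in\Omega_{l^*}$, rather than just being an abstract limit of shrinking pre-tiles. This amounts to showing that once $y^{(k)}$ is sufficiently far from $C$ its partition membership is unambiguously $A_{j_k}$, which in turn couples the escape bound on $x_k$, the contraction estimate between $y^{(k)}$ and $x_k$, and the fact that $\mathbf{j}\in\overleftarrow{\Sigma}$ forces the $x_k$ themselves to have the desired partition membership.
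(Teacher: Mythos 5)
Your treatment of the nesting in (i) and of (ii) matches the paper (Lemma \ref{topshiftlemma} plus the pullback identity, and Lemma \ref{smallerlemma} via extension of the tail, respectively). But the stabilization step of (i) contains a genuine error: you assert that $f_{j_k}(A)\cap f_{v_1}(A)\subseteq C\cup\partial A$ for $v_1\neq j_k$. This is exactly what fails in the overlapping setting the theorem addresses: the overlap of two first-level images can have nonempty interior, and only the boundaries of the \emph{top partition pieces} $A_i$, not the overlaps themselves, feed into $C=\overline{\bigcup_i\partial A_i\setminus\partial A}$. Concretely, for the IFS of Equation \ref{ex2formula} one has $f_1(A)\cap f_2(A)=[0.4,0.6]\times[0,0.6]$, while $C$ consists of segments of the lines $x=0.6$ and $y=0.6$; the point $(0.5,0.3)$ lies in that overlap, in the interior of $A_1$, at positive distance from $C\cup\partial A$. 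So from ``$y$ lies in a new cut'' you cannot conclude $d(x_k,C)\leq\lambda^{k-l}|A|$, and your contradiction evaporates. The correct route, which is the paper's, replaces the overlap by the partition boundary: $f_{j_k}(A_{j_{k-1}\dots j_{l+1}i_1\dots i_{l+1}})$ contains the nonempty set $A_{j_kj_{k-1}\dots i_{l+1}}\subseteq A_{j_k}$, whereas any of its points lying in some $f_{v_1}(A)$ with $v_1>j_k$ lies outside $A_{j_k}$; passing from one to the other forces the set to meet $\partial A_{j_k}\setminus\partial A\subseteq C$. Your distance estimates (which coincide with Equations \ref{estimate1}--\ref{ineq}) then show that $f_{j_k\dots j_{l+1}}(A)$, a superset of the set in question, is disjoint from $C$ once $k$ is sufficiently larger than $l$; this is what rules out the higher-priority cuts and gives $f_{j_k}(A_{j_{k-1}\dots})=A_{j_kj_{k-1}\dots}$.

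For (iii) you take a pointwise orbit-tracking route and explicitly leave its key step open; the paper avoids that step entirely by computing $\bigcup S(l)$ in closed form. Once $K_l=l+1$ (which holds for all $l$ beyond the explicit bound of Equation \ref{estimate3}), one gets $\bigcup S(l)=f_{-\mathbf{j}|l}(f_{j_{l+1}}^{-1}(A_{j_{l+1}}))$, which is precisely the region claimed to be tiled; the blowup statement then follows from $f_1^{-1}(A_1)=A$ together with the nestedness of $\left\{ f_{-\mathbf{j}|l}(A)\right\} _{l}$ and the fact that $f_1^{-1}$ is applied infinitely often. You would need either to supply this identity or to complete the tracking argument you sketch; as written, (iii) is not proved.
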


\begin{proof}
We begin by noting the following. The sets $A_{j_{k}\dots j_{l+2}j_{l+1}%
i_{1}\dots i_{l+1}}$, as in the preamble to the theorem, are non-empty by
Lemmas 2 and 3. Each of these sets has non-empty interior, by the following
argument. $A$ has non-empty interior by assumption. Because $A$ is an
attractor of an IFS\ of invertible maps, it is the closure of its interior,
i.e. $A=\overline{(A^{\circ})}$. Since the composition of homeomorphisms
preserves closure and non-empty interior, by Equation \ref{coreydef} in
Section 2.3, $A_{j_{k}\dots j_{l+2}j_{l+1}i_{1}\dots i_{l+1}}$ is of the form
$E\backslash G$ where $E$ has non-empty interior and $G$ is closed, so
inductively $A_{j_{k}\dots j_{l+2}j_{l+1}i_{1}\dots i_{l+1}}$ has non-empty
interior by Lemma \ref{lemmafour}.

To prove the first part of (i), observe that by Lemma \ref{topshiftlemma},
\[
f_{j_{k+1}}(A_{j_{k}\dots j_{l+2}j_{l+1}i_{1}\dots i_{l+1}})\supseteq
A_{j_{k+1}j_{k}\dots j_{l+2}j_{l+1}i_{1}\dots i_{l+1}}%
\]
and apply $f_{j_{1}}^{-1}\dots f_{j_{k+1}}^{-1}$ to both sides.

For the remainder of (i), notice that if $f_{j_{k+1}}(A_{j_{k}\dots
j_{l+2}j_{l+1}i_{1}\dots i_{l+1}})\cap C=\emptyset,$ then for any
$f_{\omega_{1}\omega_{2}\dots\omega_{k+1}}(A)$ intersecting $f_{j_{k+1}%
}(A_{j_{k}\dots j_{l+2}j_{l+1}i_{1}\dots i_{l+1}})$ such that $\omega
_{1}\omega_{2}\dots\omega_{k+1}>j_{k+1}j_{k}\dots j_{l+2}j_{l+1}i_{1}\dots
i_{l+1}$, we must have $\omega_{1}=j_{k+1}$. That is, $f_{j_{k+1}}%
(A_{j_{k}\dots j_{l+2}j_{l+1}i_{1}\dots i_{l+1}})=A_{j_{k+1}j_{k}\dots
j_{l+2}j_{l+1}i_{1}\dots i_{l+1}},$ and by applying $f_{j_{1}}^{-1}f_{j_{2}%
}^{-1}\dots f_{j_{k+1}}^{-1}$ to both sides we get $S(l,\mathbf{i,}%
k)=S(l,\mathbf{i,}k+1).$ So it suffices to find a large enough $K$ such that
$A_{j_{k}\dots j_{l+2}j_{l+1}i_{1}\dots i_{l+1}}$ contains no points of $C$
for all $k\geq K$.

But $A_{j_{k}\dots j_{l+2}j_{l+1}i_{1}\dots i_{l+1}}$ is contained in
$f_{j_{k+1}j_{k}\dots j_{l+2}j_{l+1}}(A)$ and for any fixed $l$ and $k$
sufficiently larger than $l,$ the condition on $\mathbf{j\in}%
\overleftarrow{\Sigma}$, $\widehat{\lambda}\in(\lambda,1),$ $x_{0}\in A,$ and
$\varepsilon>0,$ ensures that $f_{j_{k+1}j_{k}\dots j_{l+2}j_{l+1}}(A)$ does
not meet $C.$ Indeed,%
\begin{equation}
d(f_{j_{k+1}j_{k}\dots j_{l+2}j_{l+1}l+1\dots j_{1}}(x_{0}),f_{j_{k+1}%
j_{k}\dots j_{l+2}j_{l+1}}(A))\leq\lambda^{k-l}|A|, \label{estimate1}%
\end{equation}
where $\left\vert A\right\vert =\max\left\{  d(x,y)|x,y\in A\right\}  $ with
$d$ the metric on $\mathbb{R}^{n},$ and%
\begin{equation}
d(f_{j_{k+1}j_{k}\dots j_{l+2}j_{l+1}\dots j_{1}}(x_{0}),C)\geq\left(
\widehat{\lambda}\right)  ^{k+1}\varepsilon. \label{estimate2}%
\end{equation}
So the shortest distance between $f_{j_{k+1}j_{k}\dots j_{l+2}j_{l+1}}(A)$ and
$C$ is greater than
\begin{equation}
\left(  \widehat{\lambda}\right)  ^{k+1}\varepsilon-\lambda^{k-l}|A|
\label{ineq}%
\end{equation}
which is strictly positive, for any fixed $l$ and any $k$ that is sufficienly
larger than $l.$

To prove (ii), namely that $S(l)\subset S(l+1),$ note that
\begin{align*}
S(l)  &  =\{S(l,\mathbf{i}):\mathbf{i}\in\Omega_{l}\}\\
&  =\{f_{j_{1}}^{-1}\dots f_{j_{k}}^{-1}(A_{j_{k}\dots j_{l+2}j_{l+1}%
i_{1}\dots i_{l+1}}):\text{for all }k\text{ as in (i)},\text{ }\mathbf{i}%
\in\Omega_{l}\},
\end{align*}
while
\begin{align*}
S(l+1)  &  =\{S(l+1,\mathbf{i}):\mathbf{i}\in\Omega_{l+1}\}\\
&  =\{f_{j_{1}}^{-1}\dots f_{j_{k}}^{-1}(A_{j_{k}\dots j_{l+3}j_{l+2}%
i_{1}\dots i_{l+2}}):\text{for all }k\text{ as in (i)},\mathbf{i}\in
\Omega_{l+1}\}.
\end{align*}
Fix $k$ sufficiently large, say $k\geq K,$ so that both expressions on the
right hold. Then, using Lemma \ref{smallerlemma}, the set of allowed indices
in the first expression, ones with tails of the form $j_{l+1}i_{1}\dots
i_{l+1},$ where $l+1$ is fixed, is a subset of the set of allowed indices in
the second expression, with tails of the form $i_{1}\dots i_{l+2}$. So
$S(l)\subset S(l+1)$. This completes the proof of (ii).

To prove (iii) we make the following observations. (a) Let $K_{l}$ be the
maximum value of
\[
K_{l}:=\min\{k>l:f_{j_{k+1}j_{k}\dots j_{l+2}j_{l+1}i_{1}\dots i_{l+1}}(A)\cap
C=\varnothing\text{ }\forall\text{ }\mathbf{i}\in\Omega_{l}\}.
\]
Then it follows from (i) that the $k>l+1$ in the definition of $\Omega_{l}$
can be replaced by $l+1\leq k\leq K_{l}.$ (b) It can be seen from Equation
\ref{ineq} that $\left\{  K_{l}\right\}  $ is a non-increasing. In fact, upon
expansion, for%
\begin{equation}
l>(\log_{\widehat{\lambda}/\lambda}(|A|/\varepsilon\lambda)-1)/(1+\log
_{\widehat{\lambda}/\lambda}(\lambda)) \label{estimate3}%
\end{equation}
we have $K_{l}=l+1$.

It follows that
\begin{align*}%
{\textstyle\bigcup}
S(l)  &  =%
{\textstyle\bigcup\limits_{i\in\Omega_{l}}^{\infty}}
f_{j_{1}}^{-1}\dots f_{j_{K_{l}}}^{-1}(A_{j_{K_{l}}\dots j_{l+1}i_{1}\dots
i_{l+1}})\\
&  =f_{-\mathbf{j}|l}(f_{-\sigma^{l}(\mathbf{j)|}K_{l}-l}(A_{j_{K_{l}}\dots
j_{l+1}}))\text{ by (a)}\\
&  =f_{-\mathbf{j}|l}(f_{j_{l+1}}^{-1}(A_{j_{l+1}}))\text{ for all large
enough }l\text{, as in Equation \ref{estimate3}.}%
\end{align*}
The penultimate statement in the theorem follows on noting that $f_{1}%
^{-1}(A_{1})=A$ and that $\left\{  f_{-\mathbf{j}|l}(A)\right\}
_{l=1}^{\infty}$ is a nested increasing sequence. In particular, the sequence
contains $A,$ and in particular the fixed-point of $f_{1},$ for all large
enough $l,$ and is acted on by $f_{1}^{-1}$ infinitely many times, so it
contains $f_{1}^{-1}(A),$ $f_{1}^{-1}\circ f_{1}^{-1}(A),\dots.$
\end{proof}

\begin{example}
\label{example1}The tiling $\Pi(\overline{1})$ is well-defined and its support
is the blowup ${\textstyle\bigcup\limits_{k}}f_{1}^{-k}(A).$ In this case the
entire tiling is provided by the collection of pretiles $%
{\textstyle\bigcup\limits_{k}}
f_{1}^{-k}(\{A_{\mathbf{i}|(k+1)}:$ $\mathbf{i\in}\Sigma\mathbf{\},}$ because
the sequence $\{f_{1}^{-k}(\{A_{\mathbf{i}|(k+1)}:$ $\mathbf{i\in}%
\Sigma\mathbf{\}\}}_{k=1}^{\infty}$ is nested increasing. The key point is
that $\Omega_{l}=\Sigma_{l+1}$ and because of this, the blowup of every set in
the top-partitioned attractor at depth $k$ is included in the final tiling.
Examples of this kind were introduced in \cite{BB}.
\end{example}

\begin{example}
If $\mathbf{j\in}\overleftarrow{\Sigma}$ is periodic and contains the symbol
$1,$ then the tiling $\Pi(\sigma^{m}\mathbf{j})$ is well-defined for all
$m=0,1,2\dots$.
\end{example}

\begin{example}
If $\mathbf{j\in}\overleftarrow{\Sigma}$ contains the symbol $k$ infinitely
often, and $A_{k}$ contains the fixed point of $f_{k}$ in its interior, then
$\Pi(\mathbf{j})$ tiles $\mathbb{R}^{n}.$
\end{example}

\section{\label{sec3}NOTION\ OF\ PRE-TILES AND\ TOPS}

\begin{versiona}
\begin{figure}[ptb]%
\centering
\includegraphics[
height=1.25 in,
width=5.0in
]%
{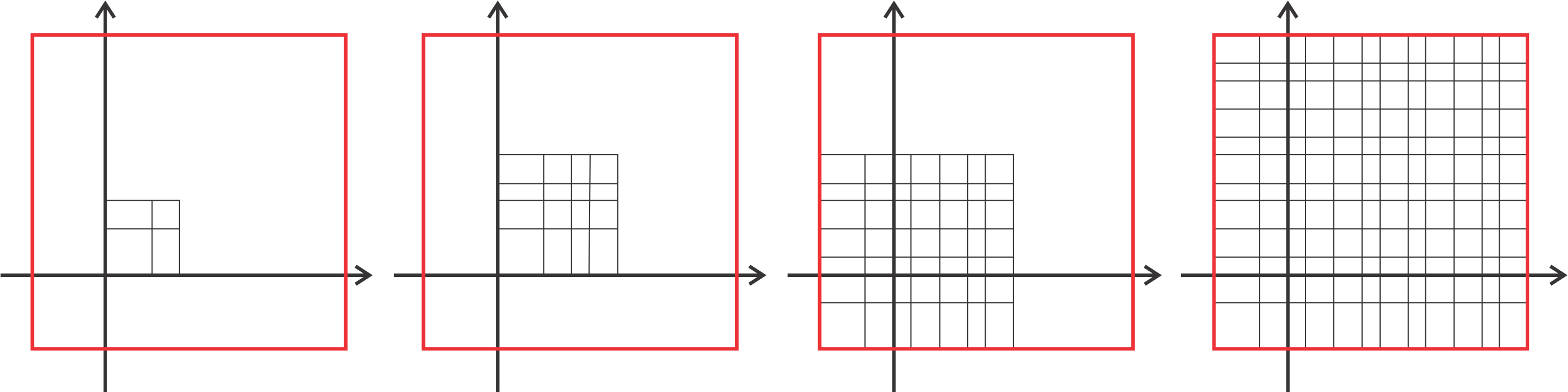}%
\caption{This picture relates to the IFS in Equation
\ref{ex2formula} and shows successive tilings of the form $f_{-\mathbf{k}|n}(\{\overline{A_{\mathbf{i}|(n+1)}}:\mathbf{i\in\Sigma\})}$ where
$\mathbf{k}|4=1414.$ In successive panels the bounding box shown in red
remains constant.}%
\label{plaintiles}%
\end{figure}
\end{versiona}

\begin{versiona}
\begin{figure}[ptb]%
\centering
\includegraphics[
height=2.0 in,
width=2.0 in
]%
{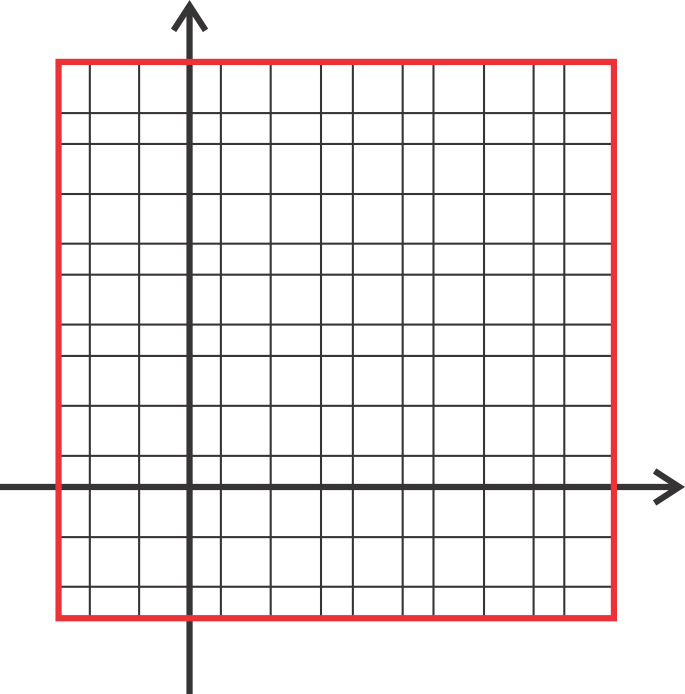}%
\caption{This shows the converged tiling, within the red
bounding box, following on from Figure \ref{plaintiles}. See text. As the tiling develops further, parts within the bounding box cease to change.}%
\label{plaintilesfinal}%
\end{figure}
\end{versiona}

\subsection{\label{osc case}Basic example of top tiling when the OSC holds}

Here we use a simple two-dimensional example, where the OSC holds, to
illustrate notions and language associated with top tilings. We consider the
IFS
\begin{equation}
\{\mathbb{C};f_{1}(z)=z/2,f_{2}(z)=(z+1)/2,f_{3}(z)=(z+i)/2,f_{4}%
(z)=(z+1+i)/2\} \label{eqifs1}%
\end{equation}
which has attractor $A=[0,1]\times\lbrack0,1].$ We may think of $A$ as being
tiled\ by the four half-sized copies $\left\{  f_{1}(A),f_{2}(A),f_{3}%
(A),f_{4}(A)\right\}  $ of $A.$ Each point of $A$ belongs to a unique
pre-tile, specifically
\[
A_{1}=[0,\frac{1}{2}]\times\lbrack0,\frac{1}{2}],A_{2}=(\frac{1}{2}%
,1]\times\lbrack0,\frac{1}{2}],A_{3}=[0,\frac{1}{2}]\times(\frac{1}%
{2},1],A_{4}=(\frac{1}{2},1]\times(\frac{1}{2},1].
\]
See Figure \ref{ownbound}. These pre-tiles, which generally are neither open
nor closed, but have non-empty interiors, are obtained by taking the
\textquotedblleft top\textquotedblright\ of the set $\{f_{1}(A),f_{2}%
(A),f_{3}(A),f_{4}(A)\}$, after assigning lexicographical ordering to the
indices $\{1,2,3,4\}$. Here the set $f_{1}(A)$ lies \textquotedblleft on top
of\textquotedblright\ parts of $f_{2}(A),f_{3}(A),$ and $f_{4}(A)$. What we
call the \textquotedblleft top tiling\textquotedblright\ of $A$ in this
example comprises four pre-tiles, the portions of the sets \ $f_{i}(A)$ that
belong to the \textquotedblleft top\textquotedblright.

We extend this top tiling to a larger region in two steps. (i)We use
\[
A=%
{\textstyle\bigcup_{i,j=1}^{4}}
f_{ij}(A)=%
{\textstyle\bigcup_{i,j=1}^{4}}
A_{ij},
\]
where $A_{ij}=f_{ij}(A)\backslash\cup_{st<ij}A_{st}.$ For example, as
illustrated in Figure \ref{ownbound},%
\[
A_{11}=[0,\frac{1}{4}]\times\lbrack0,\frac{1}{4}],A_{12}=(\frac{1}{4},\frac
{1}{2}]\times\lbrack0,\frac{1}{4}],A_{13}=(\frac{1}{2},\frac{3}{4}%
]\times\lbrack0,\frac{1}{4}],A_{14}(A)=(\frac{3}{4},1]\times\lbrack0,\frac
{1}{4}].
\]
That is, we have used the lexicographic ordering $11>12>13>14>21>\dots$ to
determine to which tile to assign points which belong to more than one set in
$\{f_{ij}(A)\}.$ Note that in general $A_{ij}\neq f_{ij}(A)$ because
$f_{ij}(A)$ is closed but $A_{ij}$ is typically not closed. (ii)We apply
$f_{1}^{-1}$ to $\left\{  A_{ij}\right\}  $ to obtain a partition of
$[0,2]\times\lbrack0,2]$ consisting of the sixteen disjoint pre-tiles
illustrated in the right-hand panel Figure \ref{ownbgrid},
\begin{align*}
f_{1}^{-1}A_{11}  &  =[0,0.5]\times(0,0.5],f_{1}^{-1}A_{12}=(0.5,1]\times
\lbrack0,0.5],\\
f_{1}^{-1}A_{13}  &  =[0.5,0.5]\times(0,1],f_{1}^{-1}A_{14}=(0.5,2]\times
\lbrack0,0.5],\\
&  \dots\\
f_{1}^{-1}A_{43}  &  =(1.5,2]\times(1,1.5],f_{1}^{-1}A_{44}=(1.5,2]\times
(1.5,2].
\end{align*}
This tiling extends the pre-tiling $\{A_{i}\},$ in particular
\[
f_{1}^{-1}A_{1j}=A_{j}\text{ for }j=1,2,3,4.
\]
We consider a second extension of the top tiling in (i), this time yielding a
tiling of $[-1,1]\times\lbrack-1,1],$ by applying $f_{4}^{-1}$ to $\left\{
A_{ij}\right\}  ,$ as illustrated in the left-hand panel in Figure
\ref{ownbgrid}. In this case we notice that%
\begin{equation}
f_{4}^{-1}A_{4j}\subsetneq A_{j}\text{ for }j=1,2,3. \label{attention}%
\end{equation}

This example may be developed further to yield obvious tilings, by squares, of
the plane, half-plane and quarter plane, depending on the choice of
$\mathbf{j}$, typifying aspects of self-similar polygonal tilings, as
discussed in \cite{polygon}, provided by the formula%
\[
\Pi(\mathbf{j})=\lim_{n\rightarrow\infty}f_{-\mathbf{j}|k}\{f_{\mathbf{i}%
|k+1}(A):\mathbf{i\in}\Sigma\}.
\]
A similar development applies to any IFS that obeys the OSC. See for example
\cite{polygon}. Our main purpose in this simple example is to draw attention
to Equation \ref{attention} and the notion of pre-tiles.

\begin{versiona}
\begin{figure}[ptb]%
\centering
\includegraphics[
height=3.1in,
width=5in
]%
{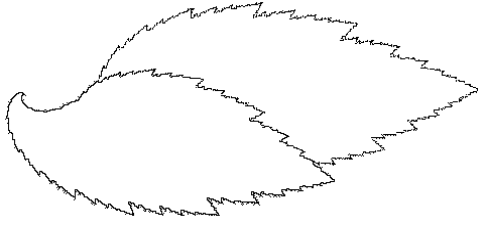}%
\caption{The overlapping attractor of an IFS of two similitudes, each with the same scaling factor. The figure has been rotated ninety degrees anticlockwise relative to the usual presentation of the x,y coordinate system.}%
\label{l0}%
\end{figure}
\end{versiona}

\subsection{\label{notoscsec}Illustrations of pre-tiles when the OSC does not
hold}

We consider the overlapping IFS%
\begin{equation}
\{\mathbb{C};f_{1}(z)=\frac{3}{5}z,f_{2}(z)=\frac{3z+2}{5},f_{3}%
(z)=\frac{3z+2i}{5},f_{4}(z)=\frac{3z+2+2i}{5}\}. \label{ex2formula}%
\end{equation}
The left hand panel in Figure \ref{plaintiles} illustrates the attractor,
namely the unit square, with the four pre-tiles
\begin{align*}
A_{1}  &  =[0,0.6]\times\lbrack0,0.6],A_{2}=(0.6,1]\times\lbrack0,0.6],\\
A_{3}  &  =[0,0.6]\times(0.4,1],A_{4}=(0.6,1]\times(0.6,1].
\end{align*}
The subsequent panels illustrate successively the partitions $f_{1}%
^{-1}(\left\{  A_{i_{1}i_{2}}:i_{1}i_{2}\in\Sigma_{2}\right\}  )$, $f_{1}%
^{-1}f_{4}^{-1}(\left\{  A_{i_{1}i_{2}i_{3}}:i_{1}i_{2}i_{3}\in\Sigma
_{3}\right\}  ,f_{1}^{-1}f_{4}^{-1}f_{1}^{-1}(\left\{  A_{i_{1}i_{2}i_{3}%
i_{4}}:i_{1}i_{2}i_{3}i_{4}\in\Sigma_{4}\right\}  ).$ In each panel the same
viewing window is illustrated in red. It can be seen that in successive
panels, pre-tiles that cover approximately the same region either stay the
same or shrink, while new pre-tiles appear. Figure \ref{plaintilesfinal}
illustrates a close-up of the part of the partition $f_{1}^{-1}f_{4}^{-1}%
f_{1}^{-1}f_{4}^{-1}(\left\{  A_{i_{1}i_{2}i_{3}i_{4}i_{5}}:i_{1}i_{2}%
i_{3}i_{4}i_{5}\in\Sigma_{5}\right\}  )$ that lies within the bounding box.
This patch of the final tiling has now converged, and is a subset of
$f_{-\mathbf{k}|n}(\left\{  A_{\mathbf{i|}n+1}:\mathbf{i}\in\Sigma\right\}  )$
for all $\mathbf{k}$ such that $\mathbf{k}|4=1414.$

\section{EXAMPLES\ OF\ TOPS\ TILINGS}

\begin{versiona}
\begin{figure}[ptb]%
\centering
\includegraphics[
height=1.68in,
width=5.5in
]%
{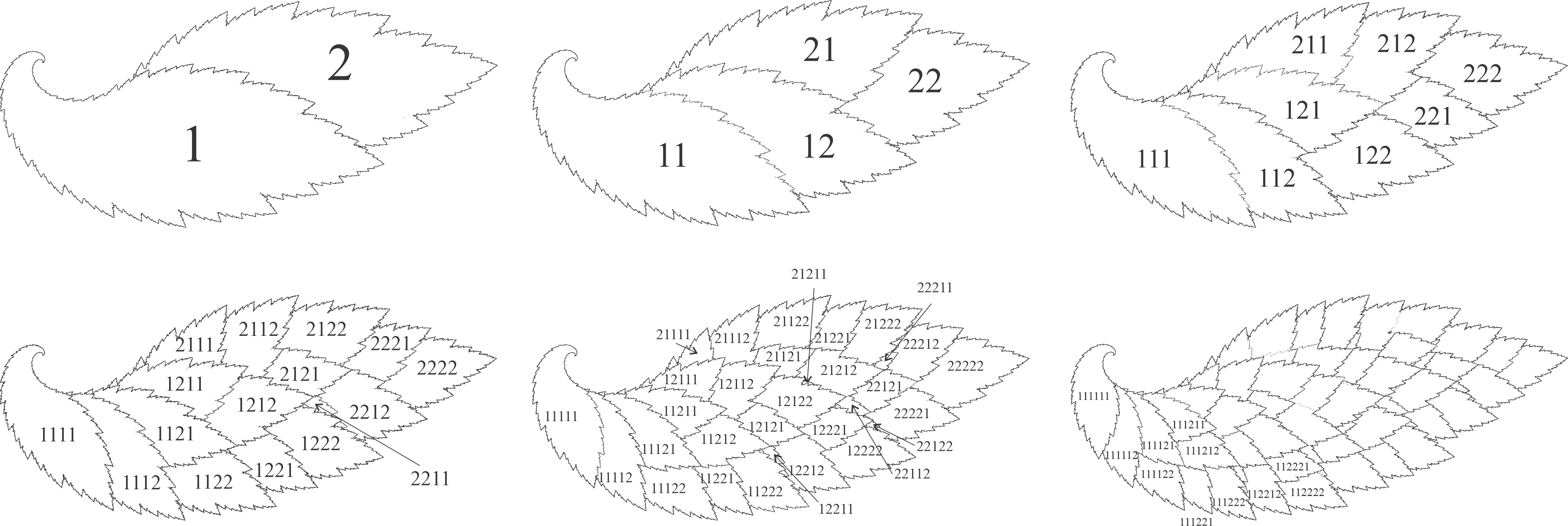}%
\caption{Illustration of the top of the leaf attractor  at depths 0 to 5.}%
\label{overlapleafQ}%
\end{figure}
\end{versiona}

\subsection{Leaf example of a two-dimensional top tiling\label{leafysec}}

\begin{versiona}
\begin{figure}[ptb]%
\centering
\includegraphics[
height=2.7648in,
width=5.0047in
]%
{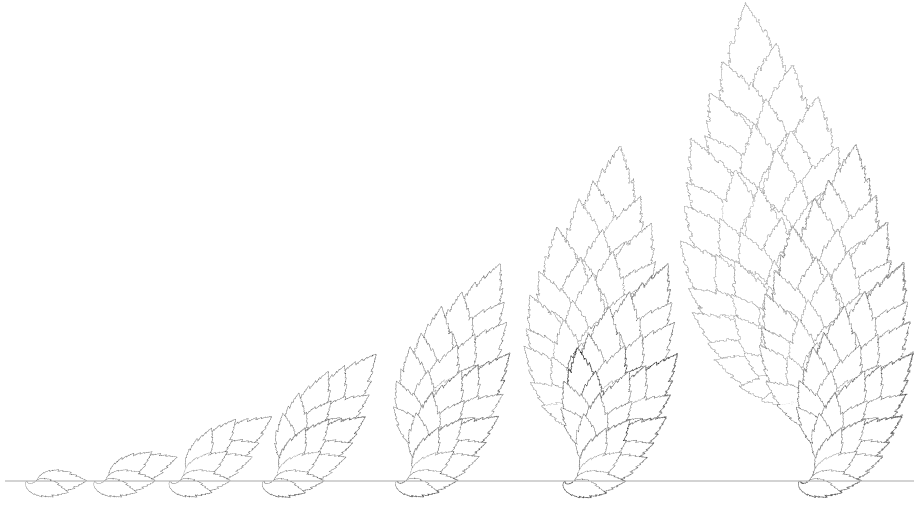}%
\caption{This shows the sequence of tops $\Pi(111...|n)$ for
$n=0,1,\dots,6$ for the leaf IFS.  In each case the tip of the stem is at the same point in the xy-plane.}%
\label{newseq}%
\end{figure}
\end{versiona}

We consider the IFS defined by the two similitudes%
\begin{align}
f_{1}(x,y)  &  =(0.7526x-0.2190y+0.2474,0.2190x+0.7526y-0.0726)
\label{leafifs}\\
f_{2}(x,y)  &  =(-0.7526x+0.2190y+1.0349,0.2190x+0.7526y+0.0678)\nonumber
\end{align}
The attractor looks like a leaf, and is the union of two copies of itself, as
illustrated in Figure \ref{l0}$.$ The proof that this attractor has nonempty
interior is illustrated in Figure \ref{both}. The point with address
$\overline{\mathbf{1}}=111\dots$ is represented by the tip of the stem of the
leaf. The stem is actually arranged in an infinite spiral, not visible in the picture.

\begin{versiona}
\begin{figure}[ptb]%
\centering
\includegraphics[
height=1.7443in,
width=3.8043in
]%
{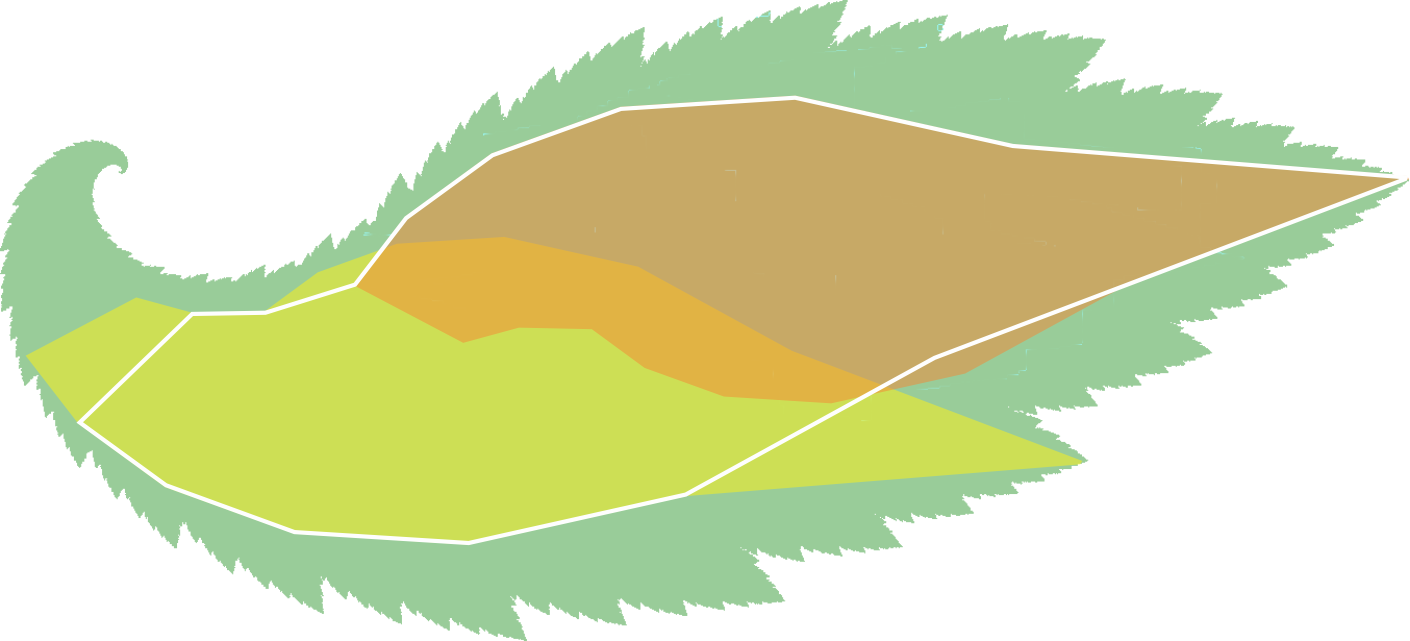}%
\caption{Illustration of the proof that the leaf attractor has nonempty interior. The white polygon is mapped by the IFS into two polygons whose interiors cover the interior of the original polygon.}%
\label{both}%
\end{figure}
\end{versiona}

Figure \ref{overlapleafQ} illustrates the top of $L$ at depths $n\in
\{0,1,\dots,5\}$ labelled for the most part by the addresses in $\Sigma_{n}$ .

Figure \ref{newseq} illustrates the successive top tilings $\Pi(\overline
{1}|n)$ for $n=0,1,\dots,5$ for the IFS in Equation \ref{leafifs}. In this
case the pre-tiles do not shrink for some initial set of levels because
$\overline{1}$ is the lexicographically maximal address. The pictures
represent successively larger fully converged parts of the unbounded tiling
$\Pi(\overline{1}).$ They were computed by the same method as those in Figure
\ref{8x8}.

\begin{versiona}
\begin{figure}[ptb]%
\centering
\includegraphics[
height=5.0in,
width=5.0in
]%
{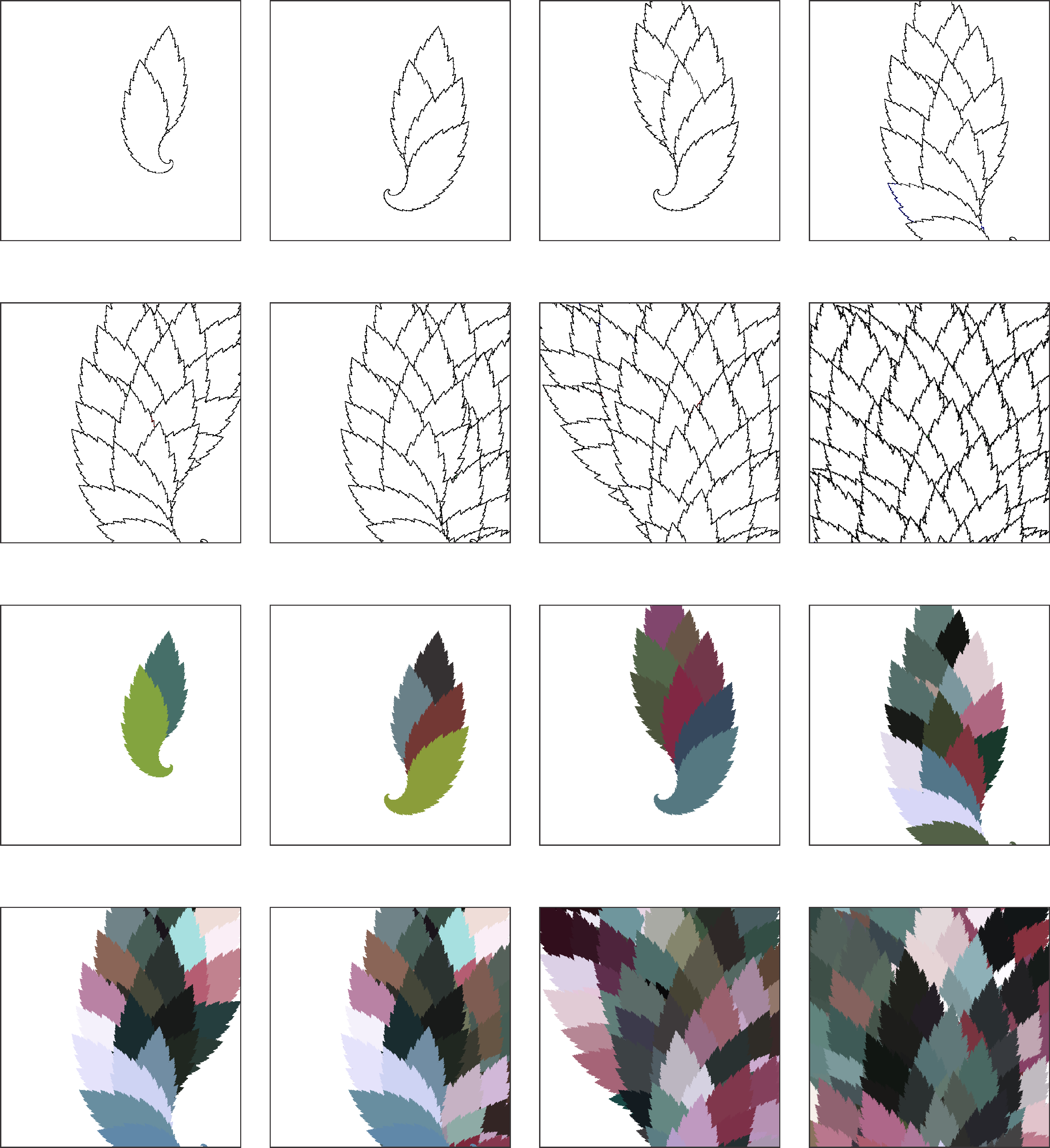}%
\caption{Example of a sequence of pre-tilings associated with a non-reversible address: it illustrates the part of $\Pi(\overline{21}|n)$ that
intersects the square $[0,1]\times\lbrack0,1],$ for $n=0,1,2,3,4,5,6,7.$ See text. }%
\label{8x8}%
\end{figure}
\end{versiona}

Figure \ref{8x8} illustrates the part of $\Pi(\overline{21}|n)$ that
intersects the square $[0,1]\times\lbrack0,1],$ for $n=0,1,2,3,4,5,6,7.$ These
illustrations were computed using the chaos game algorithm, assigning unique
colours "stolen" from a master image, to identify pre-tiles by their addresses
to depth $n,$ illustrated in the bottom eight panels, then using an edge
detection routine. While it appeared from these pictures that $\overline
{21}\in\overleftarrow{\Sigma},$ we found later that $\Pi(\overline{21}|8)$
contains a new tiles in $[0,1]\times\lbrack0,1]$ that overlap many tiles in
$\Pi(\overline{21}|6)$. A subsequent calculation convinced us that
$\overline{21}\notin\overleftarrow{\Sigma}.$

However Figure \ref{bigleaf4} correctly represents the part of the top tiling
$\Pi(\overline{2})$ that lies within $[0,1]\times\lbrack0,1].$ In this case it
is readily checked that $\overline{2}\in\overleftarrow{\Sigma}.$

\begin{versiona}
\begin{figure}[ptb]%
\centering
\includegraphics[
height=5.0in,
width=5.0in
]%
{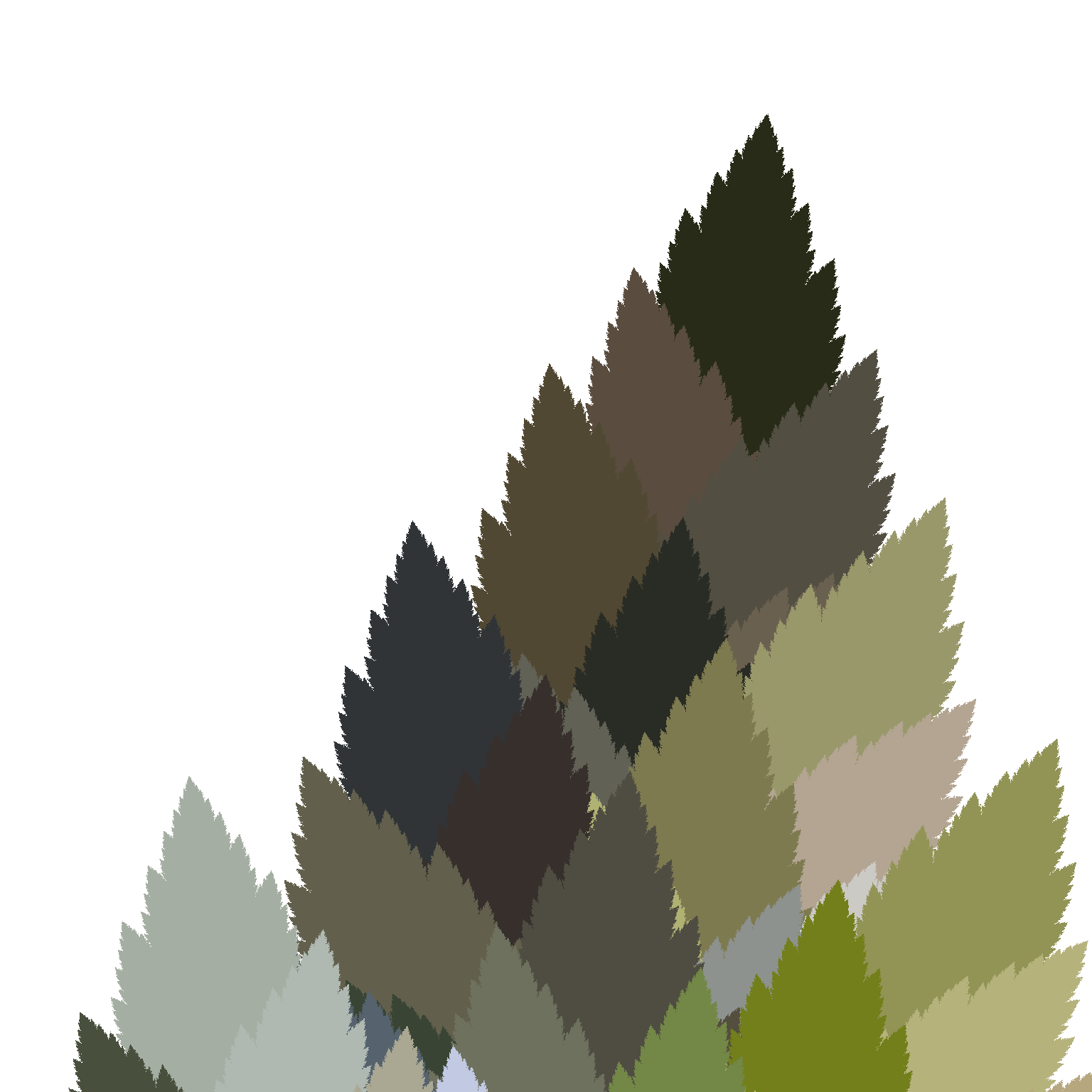}%
\caption{See text. This illustrates the part of the tiling $\Pi(\overline{2})$ within the window  $[0,1]\times\lbrack0,1].$}%
\label{bigleaf4}%
\end{figure}
\end{versiona}

\begin{versiona}
\begin{figure}[ptb]%
\centering
\includegraphics[
height=2.0in,
width=4.5in
]%
{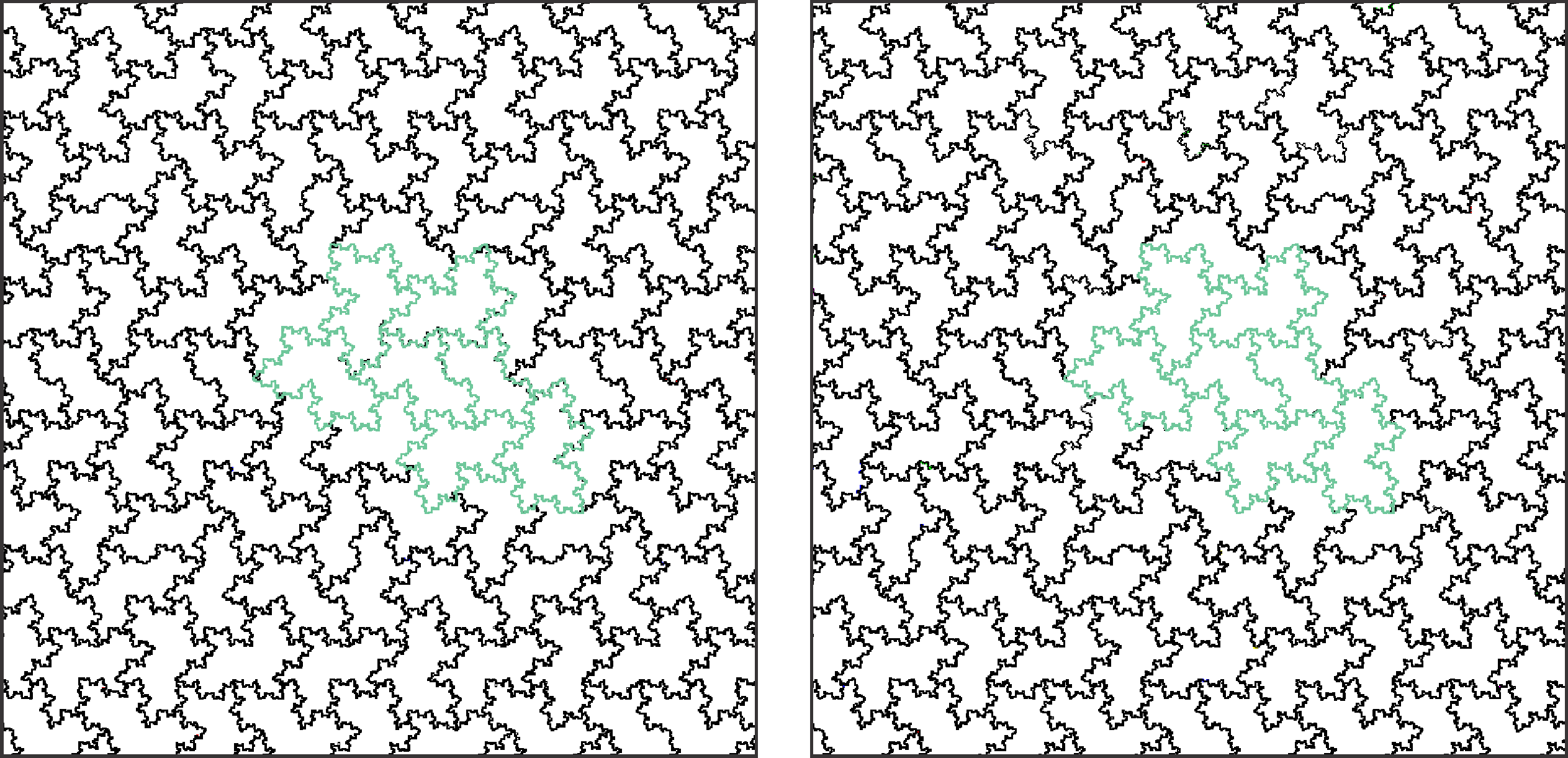}%
\caption{ Within the same window, two of many different top tilings associated with an aperiodic monotile of Smith et al. The top tiling of the attractor of the IFS is illustrated in green. See text. }%
\label{louisaboth}%
\end{figure}
\end{versiona}

\subsection{Top tilings related to hat tilings}

Figure \ref{louisaboth} illustrates the attractor and the part of each of the
top tilings $\Pi(\overline{5324})$ and $\Pi(\overline{2345})$ for the
following IFS, within the viewing window $-1\leq x\leq2,$ $-1.5\leq y\leq1.5.$
The tilings differ in the top left quadrant.

$f_{1}(z)=\frac{2}{\sqrt{5}+3}z$

$f_{2}(z)=\frac{2}{\sqrt{5}+3}e^{(-i\pi/3)}z+\frac{1}{4}\left(  \sqrt
{5}-1\right)  \left(  i\sqrt{3}+1\right)  $

$f_{3}(z)=\frac{2}{\sqrt{5}+3}e^{(-i2\pi/3)}z+\frac{1}{4}\left(  \sqrt
{5}-1\right)  \left(  i\sqrt{3}+3\right)  $

$f_{4}(z)=\frac{2}{\sqrt{5}+3}e^{(2i\pi/3)}z+\frac{1}{4}\left(  i\sqrt
{3}+\frac{1}{4}\sqrt{5}\right)  +\frac{3}{4}-\frac{1}{4}i\sqrt{15}$

$f_{5}(z)=\frac{2}{\sqrt{5}+3}e^{(i\pi/3)}z+\frac{1}{4}(i\sqrt{3}-\sqrt
{5})+\frac{5}{4}-\frac{1}{4}i\sqrt{15}$

$f_{6}(z)=\frac{2}{\sqrt{5}+3}z+\frac{1}{2}(3-\sqrt{5})$

$f_{7}(z)=\frac{2}{\sqrt{5}+3}z+\frac{1}{2}(\sqrt{5}-1)$

This IFS was derived from study of Figure 2.14 in \cite{smith}. It provides an
alternative description of structure of the limiting fractal tiling shapes
associated with hat tilings. Instead of there being two different structures
involved there is a single shape. The tile corresponding to $f_{7}(z),$ namely
$f_{7}(A),$ has a missing piece because it lies partly underneath $f_{6}(A).$
Otherwise the sets $f_{i}(A)$ are just-touching. In fact this system is of
finite type and may be represented by a graph directed IFS. Here different,
non-homeomorphic tilings are codified by their corresponding reversible
addresses, namely the infinite labelled paths of the transposed corresponding
graph. See Figure \ref{louisaboth}. The tiled attractor of the IFS is
indicated in green. More details concerning this example are included in
\cite{corey}.

\section{Generalization}

The theory developed in this paper applies in the setting of point-fibred IFS,
as defined in \cite{tilings} based on the original work of \cite{kieninger}.
In particular, top tilings may be developed in the context of loxodromic
M\~{o}bius transformations \cite{vince}.

A version of the theory also applies when the tiles are defined using regions
yielded by applying $f_{-\mathbf{j}|n}$ to all images of the boundary of $A$
in $f_{\mathbf{i}|n}(A)$ at depth $n$ for all $\mathbf{i}.$ In this setting
Baire's Theorem provides well-defined tilings of say $\mathbb{R}^{2},$ one for
each of a dense set of choices for $\mathbf{j\in}\left\{  1,2,\dots,M\right\}
^{\mathbb{N}}.$ This allows tilings to be well-defined in the case of
non-algebraic data, possessing some level of self-similarity, and maybe
suitable for modelling images derived from nature.

\section{Acknowledgement}

We thank Louisa Barnsley for help with the artwork in general and for her
substantial contribution to the proof that the leaf attractor has nonempty interior.

\end{document}